\newtheorem{theorem}{Theorem}
\newtheorem{thm}{Theorem}[section]
\newcommand{\bt}{\begin{thm}}
\newcommand{\et}{\end{thm}}
\newtheorem{cor}[thm]{Corollary}   
\newcommand{\bc}{\begin{cor}}
\newcommand{\ec}{\end{cor}}
\newtheorem{lem}[thm]{Lemma}   
\newcommand{\bl}{\begin{lem}}
\newcommand{\el}{\end{lem}}
\newtheorem{proposition}{Proposition}
\newtheorem{prop}[thm]{Proposition}
\newcommand{\bp}{\begin{prop}}
\newcommand{\ep}{\end{prop}}
\newtheorem{defn}[thm]{Definition}
\newcommand{\bd}{\begin{defn}}    
\newcommand{\ed}{\end{defn}}
\newtheorem{rmrk}[thm]{Remark}   
\newcommand{\br}{\begin{rmrk}}
\newcommand{\er}{\end{rmrk}}
\newcommand{\be}{\begin{equation}}
\newcommand{\ee}{\end{equation}}
\newcommand{\R}{\mathbb{R}}
\newcommand{\dist}{\operatorname{dist}}
\newcommand{\diam}{\operatorname{Diam}}
\newcommand{\Ricci}{\rm{Ric}}
\newcommand{\vol}{\operatorname{Vol}}
\newcommand{\norm}[1]{\left\lVert#1\right\rVert}
\newcommand{\ot}{\overline{\theta}}
\begin{document}


\title{Gromov-Lawson tunnels with estimates}


\author{J\'ozef Dodziuk}
\address{CUNY Graduate Center and Queens College}
\email{jdodziuk@gmail.com}


\keywords{}



\begin{abstract}
In an appendix to an earlier paper \cite{BDS-sewing} we showed how to construct tunnels
of positive scalar curvature and of arbitrarily small length and volume connecting points in a \emph{three dimensional}
manifold of \emph{constant sectional curvature}. Here we generalize the construction to arbitrary dimensions and
require only positivity of the scalar curvature.

\end{abstract}
\maketitle

\section{Introduction}
Suppose $X$ is a Riemannian manifold of positive scalar curvature of dimension $n\geq 3$. Gromov and Lawson
\cite{Gromov-Lawson-tunnels2}, \cite{Gromov-Lawson-tunnels} and independently Schoen and Yau \cite{Schoen-Yau-tunnels}
proved that if a manifold $M$ obtained from $X$ by a surgery on a sphere of codimension greater than or equal to 3
then $M$ carries a metric of positive scalar curvature. Obstructions to the existence of such metrics had been known
previously cf. \cite{Lich62}, \cite{Lich63}, \cite{Hitch74}.
The breakthrough provided by Gromov-Lawson and Schoen-Yau constructions led to a great deal of understanding of which
smooth manifolds carry metrics of positive scalar curvature.

More recently Rosenberg and Stolz \cite{Rosenberg-Stolz-PSC2001} revisited the construction of Gromov and Lawson to correct a mistake in the proof. Our reason for doing this again is to obtain an estimate on the size of the part
of the manifold where modification takes place. Before making this statement precise we need to introduce some
terminology and notation. As Gromov and Lawson point out the case of connected sums is the most important since it generalizes easily to surgeries on spheres of positive dimensions. Thus we will consider only this case.

\begin{figure}[h]
\begin{overpic}[width=0.5\textwidth,trim=20 20 20 20, clip]{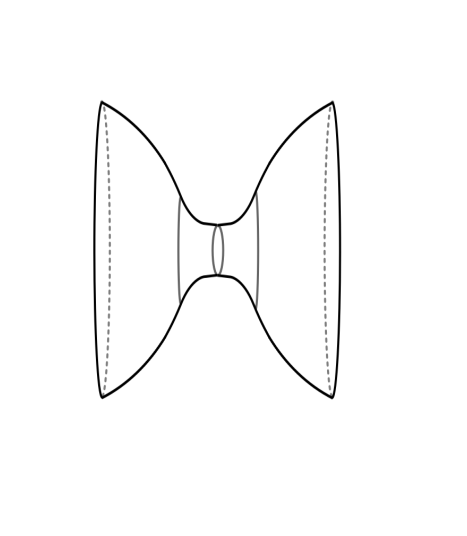}
\put(25,52){$C_1$}
\put(52,52){$C_2$}
\put(18,19){$\underbrace{\hphantom{\qquad\qquad\qquad\qquad\,\,\,\,}}$}
\put(39,11){$U$}
\put(32,70){$ \overbrace{\hphantom{\qquad\quad}} $}
\put(38,75){$U'$}
\put(17,21){\rotatebox{45}{$\underbrace{\hphantom{-----}}$}}
\put(23,20){\rotatebox{45}{$\delta - \delta_0$}}
\end{overpic}
\setlength{\abovecaptionskip}{-11pt}
\setlength{\belowcaptionskip}{-8pt}
\caption{The tunnel}
\label{Fig.SY-GL-tunnel}
\end{figure}

Suppose $p_1\neq p_2$ are two points of (possibly disconnected) $X$. Let $B(p_i,\delta)\subset X$, $i=1,2$, be disjoint balls in $X$ with the radii $\delta$ smaller than the injectivity radii at $p_1$ and $p_2$. The connected sum $M$ is obtained by removing the two balls from $X$ and glueing in a cylindrical region $U$ diffeormorphic to 
\mbox{$S^{n-1} \times [0,1]$}. Thus
$$
M = X \setminus \left( B(p_1,\delta) \cup B(p_2,\delta) \right) \sqcup U.
$$
We fix $\delta_0 \in (0, \delta)$ sufficiently small to be specified later. The collars $C_i = B(p_i,\delta) \setminus B(p_i,\delta_0)$ are identified with subsets of $M$ and the Riemannian metric $g$ to be constructed on $U$ will agree 
with the original metric $h$ of $X$ (see the Figure~\ref{Fig.SY-GL-tunnel}). We will call the set $U$ the tunnel and
prove the following theorem.

\begin{theorem}
There exists a constant $\delta_0\in (0,\delta)$ and a Riemannian metric $g$ on $U$ with positive scalar curvature such that
\begin{align}
{}& g\,|\,C_i = h\,|\,C_i\qquad \mbox{for} \quad i=1,2 \\
{}& g \quad\mbox{has positive scalar curvature}\\
\label{diam-vol}
{}& \diam{U} = O(\delta)\quad \mbox{and}\quad \vol{U} = O(\delta^n).
\end{align}

More precisely, the set $U' = U \setminus \left(C_1 \cup C_2\right)$ satisfies $\diam{U'} = O(\delta_0)$ and $\vol{U'}=O(\delta_0^{n})$.
\end{theorem}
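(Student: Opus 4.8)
The plan is to run the Gromov--Lawson construction for surgery on $S^0$ (the connected sum), following \cite{Gromov-Lawson-tunnels} in the corrected form of Rosenberg--Stolz \cite{Rosenberg-Stolz-PSC2001}, while keeping every length appearing in the modification proportional to $\delta_0$ and choosing $\delta_0$ subject only to the geometry of $X$ near $p_1,p_2$. The collars cost nothing: since $C_i\subset B(p_i,\delta)$ we have $\diam C_i\le 2\delta$ and $\vol C_i\le\vol B(p_i,\delta)=O(\delta^n)$, and on $C_i$ the new metric will literally equal $h$, which has positive scalar curvature. Hence the whole content is the bound for $U'=U\setminus(C_1\cup C_2)$, and it suffices to build the genuinely new part of the tunnel inside $B(p_i,\delta_0)\times\R$ together with a short connecting collar. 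Fix once and for all a compact neighbourhood of $\{p_1,p_2\}$, and let $\sigma_0>0$ bound $\Scal_X$ from below and $C_X<\infty$ bound $|\Ricci_X|$ from above on it; these depend on $X$ but not on $\delta$.

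On $B(p_i,\delta_0)$ use geodesic normal coordinates, in which $h$ is $C^2$-close to the Euclidean metric with all errors governed by $\delta_0$ and by curvature bounds on the fixed neighbourhood; in particular the geodesic sphere $S_r$ has second fundamental form $(r^{-1}+O(r))\,g_{S_r}$. Inside $(B(p_i,\delta_0)\times\R,\,h+dt^2)$ take the hypersurface $M_i$ swept over the spheres $S_{r(s)}$ by a plane curve $\gamma_i=(r(s),t(s))$ which equals $\{t=0\}$ for $r\ge\delta_0$, turns monotonically through angle $\tfrac{\pi}{2}$ while $r$ decreases from $\delta_0$ to $\vare:=c\delta_0$ (starting and finishing tangentially, over arc length $O(\delta_0)$), and then runs up the vertical ray $\{r=\vare\}$. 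This funnel-shaped $M_i$ carries the metric $h$ near its rim and is a half-cylinder over $S_\vare$ near its throat. Truncate the throat after length $O(\delta_0)$ and join the two throats by a cylinder $W\cong S^{n-1}\times[0,O(\delta_0)]$ carrying the linear interpolation of the two boundary metrics, which after rescaling by $\vare^{-2}$ are $C^2$-close to the round metric and to one another, so the interpolation scarcely perturbs the curvature. All junctions can be smoothed, and $U=C_1\cup M_1\cup W\cup M_2\cup C_2$ is diffeomorphic to $S^{n-1}\times[0,1]$ with $g|_{C_i}=h|_{C_i}$.

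Positivity of $\Scal_g$ comes from the Gauss equation $\Scal_{M_i}=\Scal_X-2\Ricci_{X\times\R}(N,N)+H^2-|A|^2$. Since the turn is monotone, the second fundamental form $A$ of $M_i$ is positive semidefinite on the funnel: its curve-direction eigenvalue is the nonnegative geodesic curvature of $\gamma_i$, its $S_r$-block equals $\sin\theta\cdot \mathrm{II}_{S_r}=(r^{-1}\sin\theta)(1+O(r))\,g_{S_r}$, and the mixed entries vanish identically because radial geodesics times $\R$ are flat totally geodesic surfaces, so $N$ lies in the $(r,t)$-plane. Hence $H^2-|A|^2\ge(n-1)(n-2)r^{-2}\sin^2\theta\,(1+O(r))\ge 0$, using $n\ge 3$, while $-2\Ricci_{X\times\R}(N,N)=-2\sin^2\theta\,\Ricci_X(\partial_r,\partial_r)\ge-2C_X\sin^2\theta$. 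Thus $\Scal_{M_i}\ge\sigma_0+\sin^2\theta\,\big((n-1)(n-2)r^{-2}(1+O(r))-2C_X\big)\ge\sigma_0>0$ once $\delta_0$, and hence $r\le\delta_0$, is below a threshold $\delta_0^*$ depending only on $n$ and $C_X$; near the rim this is simply $\Scal_{M_i}\approx\Scal_X>0$. On the throats and on $W$ the metric $g$ is a Riemannian product whose first factor is $C^2$-close to the round $S^{n-1}$ of radius $\vare$, with scalar curvature $(n-1)(n-2)\vare^{-2}+O(1)>0$ for $\delta_0$ small. So, choosing $\delta_0=\min(\delta_0^*,\delta/2)$, the metric $g$ has positive scalar curvature throughout.

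The estimates now follow. Each funnel $M_i$ lies in $B(p_i,\delta_0)\times[0,O(\delta_0)]$, the swept curve has length $O(\delta_0)$, and the spheres $S_{r(s)}$ have diameter at most $\pi\delta_0$, so $\diam(M_i,g)=O(\delta_0)$ and $\vol(M_i,g)=O(\delta_0^n)$ (area element $O(\delta_0^{n-1})$ over a parameter region of diameter $O(\delta_0)$); the collar $W$ is a product of an $O(\delta_0)$-thin sphere with an interval of length $O(\delta_0)$, hence $O(\delta_0)$ in diameter and $O(\delta_0^n)$ in volume. This gives $\diam U'=O(\delta_0)$ and $\vol U'=O(\delta_0^n)$, and together with the trivial bounds on $C_1,C_2$ it yields $\diam U=O(\delta)$ and $\vol U=O(\delta^n)$. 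Equivalently, the construction at scale $\delta_0$ is the $\delta_0$-rescaling of one performed for the metrics $\delta_0^{-2}h$ on unit balls, which are uniformly $C^\infty$-close to flat; scaling back multiplies $\Scal$ by $\delta_0^{-2}>0$, diameters by $\delta_0$, and volumes by $\delta_0^n$, so the estimates become automatic. I expect the only genuinely delicate step to be the quantitative curvature bookkeeping: one must verify, as in Rosenberg--Stolz, that non-roundness of the spheres $S_r$ enters $A$ only through $O(r)$ corrections, so that it is harmless for small $\delta_0$, and that making the turn sharp --- confining it to radii $\le\delta_0$ and arc length $O(\delta_0)$ --- creates no obstruction, which it does not, since the geodesic curvature of $\gamma_i$ enters $H^2-|A|^2$ only with a favourable sign and may therefore be as large as $O(1/\delta_0)$.
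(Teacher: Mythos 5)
Your overall architecture (sweep geodesic spheres over a bending plane curve, truncate at a thin cylindrical throat, round off the cross--section using the $C^2$--closeness of $(1/\epsilon^2)g_\epsilon$ to $g_{0,1}$, join two such funnels, and get all size estimates from the fact that everything lives at scale $\delta_0$) is exactly the paper's. But there is a fatal sign error at the one step that carries the entire difficulty. You assert that the second fundamental form $A$ of the funnel is positive semidefinite, so that the geodesic curvature $k$ of the profile curve enters $H^2-|A|^2=2\sum_{i<j}\lambda_i\lambda_j$ only through the favourable cross term $+2(n-1)k\,\mu$, and you conclude that $k$ may be taken as large as $O(1/\delta_0)$, making the $\pi/2$ bend in arc length $O(\delta_0)$ "automatic." In fact the curve--direction eigenvalue and the sphere--direction eigenvalues of $A$ have \emph{opposite} signs: the funnel is saddle--shaped in the bending region (for $n=2$ its Gauss curvature is $K=-r''/r=-k\sin\theta/r<0$, the bell of a trumpet). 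The correct cross term is $-(n-1)(r^{-1}+O(r))\,k\sin\theta$, as in formula (\ref{sc-curv}); the bending works \emph{against} positivity, and the product $\lambda_1\mu$ is independent of the choice of normal, so no convention rescues the sign. With the correct sign your lower bound $H^2-|A|^2\ge (n-1)(n-2)r^{-2}\sin^2\theta\,(1+O(r))$ fails, and taking $k=O(1/\delta_0)$ while $\theta$ is still small drives the scalar curvature to $-\infty$ as $\delta_0\to 0$.

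Once the sign is corrected, the positivity constraint becomes $k<\sin\theta/(4r)$ (Lemma~\ref{sufficient}), which is the whole problem: at the start of the bend $\sin\theta=0$, so the allowed bending rate is $O(1)$ (absorbed by $\kappa^D>0$ over an initial arc of length $c\delta_0$), and only after $\theta$ is bounded away from $0$ may one bend at rate comparable to $\sin\theta/(8r)$. That the total turning $\pi/2$ can nevertheless be accumulated within arc length $O(\delta_0)$ is precisely the content of the inductive construction (\ref{induction})--(\ref{sm}) (each arc of length $r_{i-1}/2$ turns by at least $\sin\theta_0/16$ while $r_i/r_{i-1}\le 1-\tfrac12\cos\ot$, so the lengths form a geometric series), together with the final single arc, which needs $\sin\ot>4/5$ to reconcile $k_{m+1}r_m>1-\sin\ot$ (so the curve does not hit $r=0$) with $k_{m+1}r_m<\sin\ot/4$. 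None of this appears in your proposal, and it cannot be waved away as "quantitative bookkeeping": it is the theorem. The rest of your argument (collars, rounding the throat, the telescoping, and the diameter/volume estimates) is fine and matches the paper.
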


Note that the earlier constructions in \cite{Gromov-Lawson-tunnels2}, \cite{Gromov-Lawson-tunnels}, \cite{Schoen-Yau-tunnels}, and \cite{Rosenberg-Stolz-PSC2001} did not give information about the size of the tunnels. In \cite{BDS-sewing},
we proved the theorem above for manifolds of three dimensions and constant positive sectional curvature. This was
sufficient for constructing examples of sequences of manifolds of positive scalar curvature whose limits (under any reasonable notions of convergence) did not have positive generalized scalar curvature in the sense explained in \cite{BDS-sewing}. 
The main result of this paper removes the restriction on dimension and allows variable sectional curvature.

\section{Outline of the proof}
In this section we establish the notation, describe the setup and outline the construction of tunnels.
Let $X$ be a Riemannian manifold of positive scalar curvature, $D\subset X$ a geodesic ball centered at
$p\in X$ of small radius $\delta$. Using the normal geodesic coordinates $x^1,x^2,\ldots x^n$, the metric
on $D$ is obtained by considering $D=\{ x^1e_1+x^2e_2+\ldots x^ne_n \mid \norm{x} \leq \delta \}$,
where $e_1,e_2\ldots e_n$ is an orthonormal basis of $T_pX$, and pulling back the metric of $X$ to D via the
exponential map. We set $r(x) = \norm{x} $ to be the distance of $x$ to the origin of D and figure
$S^{n-1}(\rho) = \{x\in D \mid r(x)=\rho\}$. We will construct a new metric on $D\setminus \{0\}$ of 
positive scalar curvature. Following Gromov and Lawson \cite[Section 1]{Gromov-Lawson-tunnels2}
we consider the Riemannian product $D\times \R$ and a suitable curve $\gamma$ (to be described and constructed
below). We then define a hypersurface $M\subset D\times \R$ as $M=\{(x,t)\in D\times \R \mid (r(x),t) \in \gamma\}$.
It is useful to think of $M$ of as a hypersurface of revolution around the $t$-axis of a curve in $(x^1,0,\ldots 0,t)$-plane. This is a correct interpretation only if the metric of $D$ has \emph{constant} sectional curvature but it is
a very good approximation of the true picture if the radius of $D$ is very small. The requirements on $\gamma$ are 
that it begins along the positive $r$-axis and ends as a horizontal line segment $r=r_\infty$. Thus the metric on $M$
extends the metric of $D$ near $\partial D$ and finishes as the product metric of the form $S^{n-1}(r_\infty)\times \R$. Of course,
the main requirement on $\gamma$ is that the resulting hypersurface $M$ has positive scalar curvature and that the 
length of $\gamma$ is $O(\delta)$. Construction of $\gamma$ is the main difficulty of the proof. Note that $S^{n-1}(r_\infty )$ is not a round sphere. However, Lemma 1 of \cite{Gromov-Lawson-tunnels2} (quoted below) allows us to modify the
metric of $M$ near the end of the tube so that the modified metric is a product of the round sphere of radius $r_\infty$ with
an interval. This will allow us to connect two such tubes to form a tunnel of very small length and volume and
of positive scalar curvature.

\begin{figure}[h]
\begin{overpic}[width=\textwidth,trim=0 190 0 0,clip]{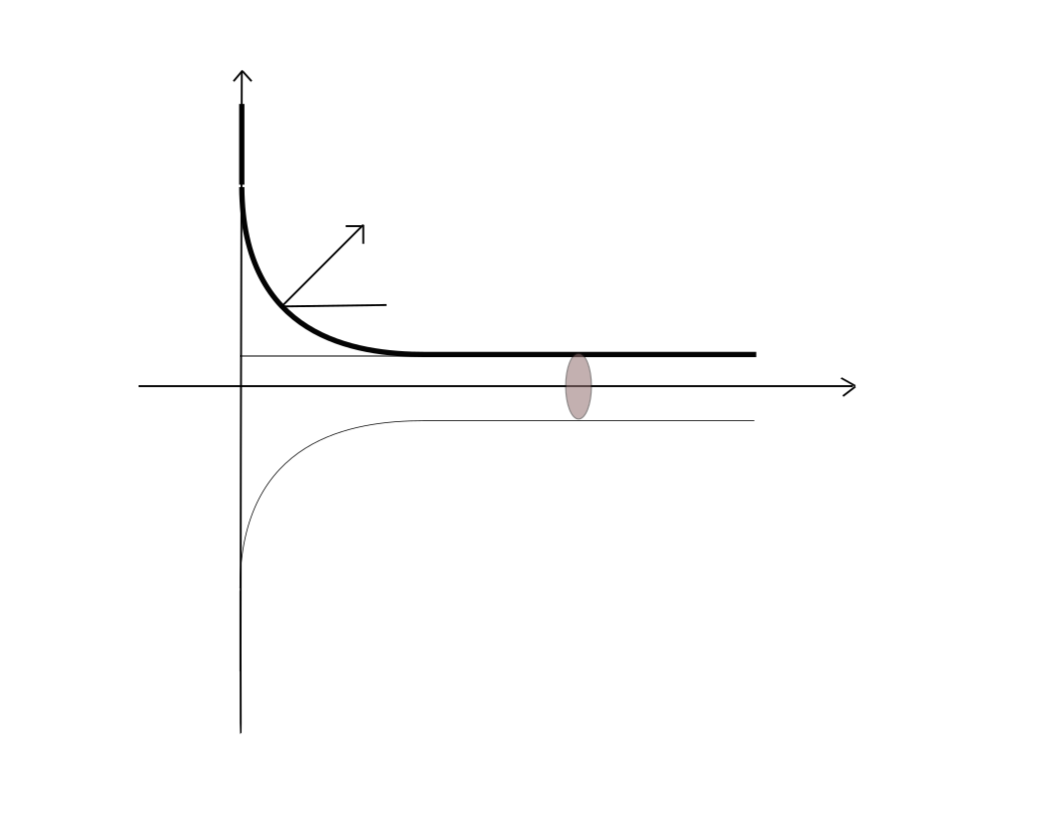}
\put(63,27){\Large $\gamma$}
\put(26,33){\Large $n$}
\put(31,31){\Large $\theta$}
\put(78,18){\Large $t$}
\put(18,50){\Large $r$}
\put(18,25){\Large $r_\infty$}
\put(18,40){\Large $\delta_0$}
\end{overpic}
\setlength{\abovecaptionskip}{-11pt}
\setlength{\belowcaptionskip}{-8pt}
\caption{The curve $\gamma$}
\setlength{\abovecaptionskip}{-11pt}
\setlength{\belowcaptionskip}{-8pt}
\label{curve-gamma}
\end{figure}

The following lemma of \cite{Gromov-Lawson-tunnels2} describes how the small geodesic spheres in $X$ differ from
round spheres of the same radius.
\bl \label{lemma1}
The principal curvatures of the hypersurfaces $S^{n-1}(\epsilon)$ in $D$ are each of the form $-{1}/{\epsilon}
+O(\epsilon)$ for small $\epsilon > 0$. Furthermore, let $g_\epsilon$ be the induced metric on $S^{n-1}(\epsilon)$ and
let $g_{0,\epsilon}$ be the standard round metric of curvature ${1}/{\epsilon^2}$. Then as $\epsilon \rightarrow 0$,
$({1}/{\epsilon^2})g_\epsilon \rightarrow (1/{\epsilon^2})g_{0,\epsilon}=g_{0,1}$ in the $C^2$ topology.
As a matter of fact, for an appropriate choice of the norm on the space of $C^2$ tensors, $
\norm{({1}/{\epsilon^2})g_\epsilon - g_{0,1}}=O(\epsilon^2)$.
\el

The estimate of principal curvatures  above and the Gauss curvature equations lead to the following expression
for the scalar curvature $\kappa$ at the point $(x,t) \in M$, cf.~formula (1) of \cite{Gromov-Lawson-tunnels2}.
\be \label{sc-curv}
\begin{split}
\kappa = \kappa^D & - 2\,{\Ricci}^D\left (\frac{\partial}{\partial r},\frac{\partial}{\partial r}\right )\sin^2\theta \\
                  & + (n-1)(n-2)\left(\frac{1}{r^2} + O(1)\right)\sin^2\theta \\
                  & - (n-1)\left( \frac{1}{r}+O(r) \right)k \sin \theta
\end{split}
\ee
where $\kappa^D(x,t)=\kappa^D(x)$ is the scalar curvature of $D$ at $x$, ${\Ricci}^D$ is the Ricci tensor of $D$ at
$x$, $k$ is the geodesic curvature of $\gamma$ at the point $(r(x),t)\in \gamma$, and $\theta$ is the angle between
the normal to $\gamma$ and the $t$-axis. We shall switch the order of variables $r$ and $t$ because we picture the $t$-axis as horizontal.

We write $s$ for the arc length parameter along $\gamma$ and define $\gamma$ by specifying its geodesic curvature
$k(s)$. Recall (cf.~Theorem 6.7, \cite{Gray-CurveSurfBook}) how $\gamma$ is determined by $k$. The unit tangent vector to $\gamma$ and the curvature are
given by

$$ \frac{d\gamma}{ds} = (\sin \theta , -\cos \theta )\qquad \mbox{and} \qquad k=\frac{d\theta}{ds}.$$

Thus, if $\gamma(s)$ is defined for $s\leq a$ and $k(s)$ is given for $s\geq a$, $\gamma(s)=(t(s),r(s))$ can
be extended as follows.
\be \label{def-eqn}
\begin{split}
\theta(s) &= \theta(a) + \int_a^s k(u)\,du\\
t(s) &= t(a) + \int_a^s \sin\theta(u)\,du\\
r(s) &= r(a) - \int_a^s \cos\theta(u)\,du.
\end{split}
\ee

We remark that a segment of $\gamma$ is a circular arc if and only if $k$ is constant. During our construction,
$\theta$ will increase from $0$ to $\pi/2$ so that the point $\gamma(s)$ will be moving down and to the right as
$s$ increases. We will construct $\gamma$ as a sequence of circular arcs, i.e. choosing $k(s)$ to be piecewise
constant. This will only be a $C^1$ curve that will be smoothed out in Section \ref{smooth}. However, $\theta(s)$, $r(s)$, and $t(s)$
will have no discontinuities as the formulae above show. We will also show that the scalar curvature of $M$ is positive
at all points where $\kappa$ is defined.

\section{The construction of a $C^1$ curve} 
Begin by choosing a point $(0,\delta_0)$ in the $(t,r)$-plane with $\delta_0$ small to be specified later. $\gamma$ runs down the $r$-axis for $s\leq 0$ with $\gamma(0)=(0,\delta_0)$. As the
initial segment of $\gamma$ for positive $s$ choose an arc of a circle of curvature $k=1$, tangent to the $r$-axis at $\delta_0$. We continue this arc for sufficiently small length
$s_0\leq \delta_0/2$ choosing $s_0$ to insure that that $\kappa$ stays positive if $s\leq s_0$. Such a choice is possible by
(\ref{sc-curv}) since $\theta(s)=0$ for $s\leq 0$ and $\kappa^D$ has a positive lower bound on $X$. Let $\theta(s_0)=\theta_0$ and $\gamma(s_0)=(t_0,r_0)$. Observe that $\theta_0$ is \emph{positive}.
The following lemma gives a sufficient condition for the scalar curvature $\kappa$ of $M$ to be positive.
\bl \label{sufficient}
For a sufficiently small $\delta_0$ and $s\geq s_0$, $\kappa$ will be positive provided
\be \label{condition}
\frac{\sin \theta}{4r} > k.
\ee
\el
\begin{proof}
We rewrite the right-hand side of (\ref{sc-curv}) as follows.
\begin{equation*}
\begin{split}  
&\frac{(n-1)(n-2)}{2r^2}\sin^2\theta +\left (\frac{(n-1)(n-2)}{2r^2} + O(1)\right )\sin^2\theta \\
-\,& 2(n-1)\frac{1}{r} k \sin \theta + \left ((n-1)\frac{1}{r}-O(r)\right ) k\sin\theta\\
+\,&\kappa^D
\end{split}
\end{equation*}

Note that the term containing the Ricci tensor in (\ref{sc-curv}) was absorbed in the term with $O(1)$ on the first line.
$\kappa^D > 0$ by assumption and so are the second terms on the first and second lines above provided that $r\leq \delta_0$ is sufficiently
small.
Thus $\kappa$ will be positive if
$$
\frac{(n-1)(n-2)}{2r^2} \sin^2\theta -\frac{2(n-1)}{r}k\sin\theta >0.
$$

It is here that we have to make the the choice of $\delta_0$ so that $r(s)$ is sufficiently small to make the terms that we dropped positive; $r(s)\leq r_0 \leq \delta_0$. Now we cancel 
common factors and use the fact that $n\geq 3$ to obtain our sufficient condition

$$
\frac{\sin\theta}{4r} -k >0.
$$
\end{proof}
Note that the inequality (\ref{condition}) is, except for the constant factor in the denominator, the same as
inequality (230) in \cite{BDS-sewing}. It is therefore not surprising that we will carry out the
construction in a very similar way to the construction in that paper. We now proceed to the main step in the construction. Suppose $\gamma(s)=(t(s),r(s))$
has been defined up to $s=a\geq s_0>0$. We extend it by a circular arc of constant curvature $k=\frac{\sin \theta(a)}{8r(a)}$ for
length $\Delta s=r(a)/2$. Since $\frac{\sin \theta(s)}{r(s)}$ is an increasing function $$
\frac{\sin \theta(s)}{4r(s)} > \frac{\sin \theta(a)}{4r(a)} > \frac{\sin \theta(a)}{8r(a)}=k$$
on $[a,a+\Delta s]$ so that the curvature condition (\ref{condition}) is satisfied. Moreover, since $\Delta s=r(a)/2$,
$\gamma$ will not cross the $t$-axis. Most importantly, by (\ref{def-eqn}), $\theta$ will increase by $$
\Delta \theta = k\Delta s= \frac{\sin \theta(a)}{8r(a)}\frac{ r(a)}{2} = \frac{\sin \theta(a)}{16} \geq 
\frac{\sin \theta(s_0)}{16},
$$
at least a fixed amount $\frac{\sin\theta_0}{16}$ \emph{independent of the starting point $\gamma(a)$.}

 We now proceed inductively beginning with $a=s_0$ by setting 
 \begin{equation} \label{induction}
 s_i=s_{i-1} + \Delta s_i , \qquad \Delta s_i=\frac{r_{i-1}}{2}, \qquad
 k_i=\frac{\sin \theta_{i-1}} {8r_{i-1}},
 \end{equation}
 where $r_j$ and $\theta_j$ denote $r(s_j)$ and $\theta(s_j)$ respectively. At every step, since $\theta(w)$ is increasing, the change in the angle
 $\Delta \theta_j =\theta_j -\theta_{j-1}$ is at least $\frac{\sin\theta_0}{16}$ so that
 \be \label{arb-large}
 \theta_i \geq \theta_0 + i\frac{\sin \theta(s_0)}{16}.
 \ee
 We are trying to construct a curve $\gamma$ for which the angle $\theta$ increases to $\pi/2$ when the tangent to 
 $\gamma$ becomes  horizontal. In any case, for the argument above
 we need $\sin \theta$ to be increasing which will be the case only if $\theta \leq \pi/2$. Since the right-hand side of
 (\ref{arb-large}) becomes arbitrarily large we can stop the construction if $\theta_{i-1}<\pi/2 \leq \theta_i$ at the value of $s$ determined by
 $\theta(s)=\pi/2$. This would produce a curve with a "full bend" but \emph{without an estimate of its length}.
 Up to now our argument is a variation of \cite[pp.\ 225--226]{Gromov-Lawson-tunnels}. To obtain a curve of controlled
 legth we break the induction off when the angle $\theta$ reaches the value $\overline{\theta}$ to be specified later
 but sufficiently close to
 $\pi/2$. We then complete the construction with a \emph{single}
 circular arc. Thus define $m$ so that $\theta_{m-1} < \ot \leq \theta_m$ and redefine $s_m$ so that 
 $\theta(s_m)=\theta_m=\ot$.
 
 The lemma below will allow us to estimate the length $s_m$.
 \bl \label{ratio}
 There exists a positive constant $C<1$ that depends only on $\ot$ such that for all $i$, $0\leq i\leq m$, $$
 \frac{r_{i}}{r_{i-1}} \leq C.$$
 \el
 \begin{proof}
 We compute $r_{i}$ explicitly using (\ref{def-eqn}) and (\ref{induction}).
 $$
 r_{i}= r_{i-1} - \int_{s_{i-1}}^{s_{i}} \cos \theta(u)\,du =r_{i-1} -\Delta s_{i} \cos \mu
 $$
for an angle $\mu \in [\theta_{i-1},\theta_{i}]$. Since $\Delta s_{i}=r_{i-1}/2$ for $i\leq m-1$, $\Delta s_m \leq r_{m-1}/2$ by
the definition of $s_m$, and $\theta_i \leq \ot$
$$
\frac{r_{i}}{r_{i-1}} = 1- \frac{1}{2}\cos \mu \leq 1-\frac{\cos\ot}{2}$$
i.e.~the lemma holds with $C=1-\frac{\cos\ot}{2}$.
\end{proof}
From the lemma above using (\ref{induction}) and the inequality $\Delta s_m \leq r_{m-1}/2$  $$
\frac{\Delta s_i}{\Delta s_{i-1}} \leq \frac{r_{i}}{r_{i-1}} \leq C.$$
Therefore \begin{equation}
\begin{split}
s_m= & s_0+\Delta s_1 +\ldots \Delta s_m \\
\leq & s_0 +\frac{r_0}{2} \left( 1 +C+\ldots +C^{m-1}\right)\\
\leq & s_0 +\frac{r_0}{2}\frac{1}{1-C}  \label{sm}
\end{split}
\end{equation}
which is $O(\delta_0)$.

So now $\gamma$ is defined on $[0,s_m]$ with $\theta(s_m)=\ot$. We show that we can achieve the "full bend"
to $\theta=\pi/2$ by extending $\gamma$ with a single circular arc. As above we need to define $k_{m+1} > 0$ and
$s_{m+1}=s_m+\Delta s_{m+1}$. We continue the added circular segment until $\theta (s_{m+1}) =\theta_{m+1} =
\pi/2$. By (\ref{def-eqn})
\begin{equation*}
\begin{split}
r_{m+1}= r(s_{m+1}) =& r_m - \int_{s_m}^{s_{m+1}} \cos \theta(u)\,du\\
= & r_m - \int_{s_m}^{s_{m+1}} \cos (s_m + k_{m+1}(u-s_m))\,du\\
= & r_m - \frac{1}{k_{m+1}} \left( \sin \theta_{m+1} -\sin \theta_m \right)\\
= & r_m - \frac{1}{k_{m+1}} \left( 1-\sin\ot \right).
\end{split}
\end{equation*}
So, for $r_{m+1}>0$ we must have  $$
k_{m+1}r_m > 1-\sin\ot .$$

On the other hand, the condition (\ref{sufficient}) will be satisfied if
$$k_{m+1}r_m < \frac{\sin\ot}{4}.$$
Thus we should choose $k_{m+1}$ so that
$$
 1-\sin\ot < k_{m+1}r_m < \frac{\sin\ot}{4}.$$
If $\sin\ot>4/5$ then $1-\sin\ot < \frac{\sin\ot}{4}$ and we can choose $k_{m+1}$ to satisfy both inequalities above. We fix $\ot \in (\sin^{-1}(4/5),\pi/2)$. Note that for such a choice
$$\Delta s_{m+1}=\frac{1}{k_{m+1}}\left( \frac{\pi}{2} - \ot\right)^{-1} < r_m (1-\sin\ot)^{-1}\left( \frac{\pi}{2} - \ot\right)$$
so that, for a fixed $\ot$,  $\Delta s_{m+1} = O(r_m)=O(\delta_0)$. Combining this with (\ref{sm}) we see that $s_{m+1}=O(\delta_0)$.

\section{Smoothing}\label{smooth}

Let us recapitulate the result of our construction so far. $k(s)$ is a piecewise constant function, $k|(s_i,s_{i+1}]=
k_{i+1}$ for $i=1,2,\ldots m$ and
$k_i < k_{i+1}$ for $i=1,2,\ldots m-1$.The resulting curve $\gamma$ is $C^1$ and piecewise $C^\infty$. It will have to be
smoothed out. Before smoothing extend $\gamma$ to the interval $[0,s_{m+1} + 2\delta_0]$ by a horizontal line segment, i.e.\ 
by setting $k=k_{m+2}=0$ on $[s_{m+1}, s_{m+1} + 2\delta_0]$. We set $S=s_{m+1} + 2\delta_0$ and smooth the
function $k(s)$ on $[0,S]$.

\begin{figure}[h]
\begin{overpic}[width=\textwidth,trim=0 200 0 0,clip]{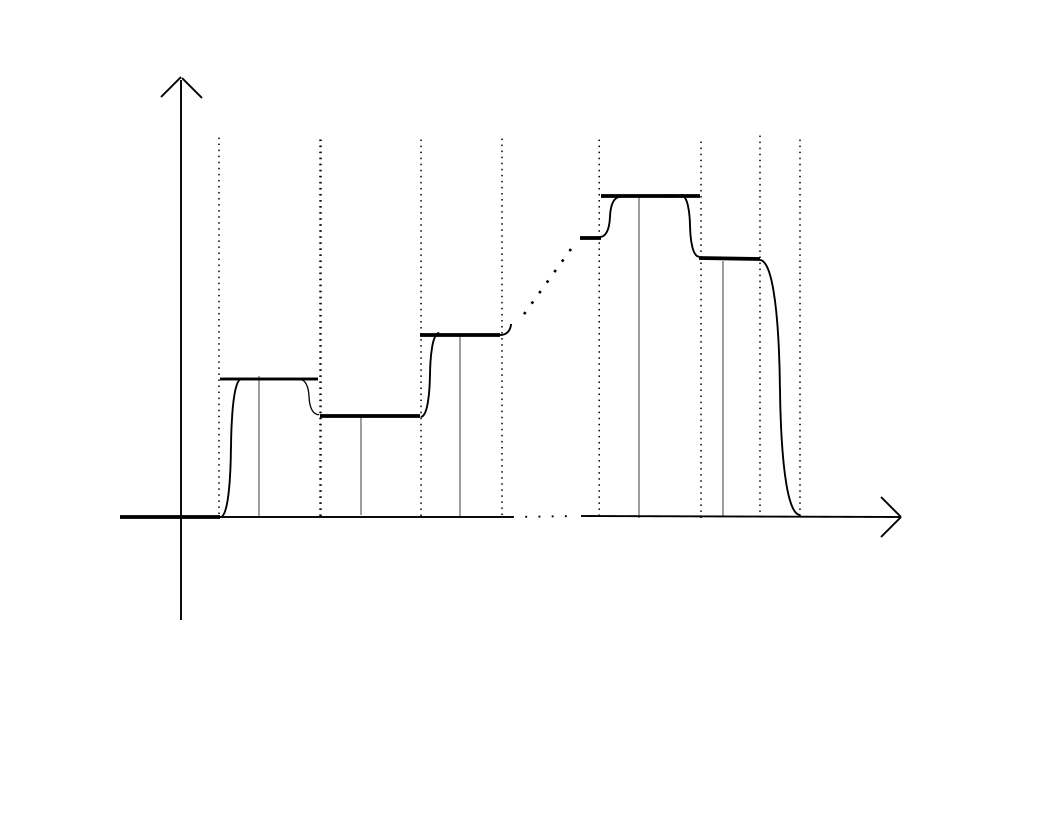}
\put(18,6){   $\delta_0$}
\put(28,6){   $s_0$}
\put(38,6){   $s_1$}
\put(46,6){   $s_2$}
\put(53,6){   $s_{m-1}$}
\put(69,6){   $s_{m+1}$}
\put(64,6){   $s_m$}
\put(75,6){   $S$}
\put(25,15){1}
\put(35,13){$k_1$}
\put(45,16){$k_2$}
\put(62,26){$k_m$}
\put(35,13){$k_1$}
\put(68,20){$k_{m-1}$}
\end{overpic}
\caption{Smoothing of the curvature function}
\setlength{\abovecaptionskip}{-11pt}
\setlength{\belowcaptionskip}{-8pt}
\label{smoothing}
\end{figure}

The smoothing procedure is elementary and is described in some detail in the appendix to \cite{BDS-sewing}. We recall
it briefly. Observe first that $k(s)$ can be approximated with arbitrarily high accuracy in $L^1$ by $C^\infty$ functions $\overline{k}(s)$ as, for example pictured in Figure~\ref{smoothing}. We can construct a family of functions
$k_\eta (s)=\overline{k}(s)$ for small positive $\eta$ so that on each of the intervals $(s_i,s_{i+1})$ of continuity $k(s)$ and  $\overline{k}(s)$ differ only near one or both end-points, $\overline{k}(s) \leq k(s)$ on $[0,s_{m+1}]$, and $\overline{k}(s)$ drops rapidly from $\overline{k}(s_{m+1})=k_{m+1}$ to zero in $[s_{m+1},S]$. We can achieve this together with
the requirement that $$
\int_0^S \overline{k}(s)\,ds = \frac{\pi}{2}.$$
Since $k_\eta (s)$ converges to $k(s)$ in $L^1$, the components $t_\eta(s)$, $r_\eta(s)$ and the normal angle $\theta_\eta(s)$ of
the curve $\gamma_\eta$ determined by $k_\eta$ will converge uniformly to $t(s)$, $r(s)$, and $\theta(s)$ respectively. This is
sufficient to conclude that for small $\eta$, the curve $\overline{\gamma}=\gamma_\eta$ will satisfy the condition (\ref{condition}) and will give rise to a hypersurface of positive scalar curvature. For details of the smoothing we refer to 
the appendix of \cite{BDS-sewing}.  We change the notation and from now on denote the smooth curve by $\gamma$. 

By choosing $\eta$ above sufficiently small we can assume that $\gamma$ is a horizontal line segment  
$r(s)=r(s_{m+1} +\delta_0)=r_\infty$ for $s\in [s_{m+1} + \delta_0, S]$ 
so that $M$ is isometric to $S^{n-1}(r_\infty) \times [s_{m+1} + \delta_0, S]$ near the end of the tube.

The construction of $\gamma$ and the resulting neck is now complete but our aim is to construct a tunnel by connecting two tubes. For that we will need to modify the metric of $M$ near its end. We first observe that the only place where the
ambient manifold enters the construction is the choice of $s_0$ on page \pageref{sufficient}. This in turn is dictated
by the bounds of the scalar curvature and the Ricci curvature on $D$. The same choice can be made for both disks that we are trying to connect. From then on the construction takes place entirely in the Euclidean $(t,r)$-plane and making the same choices
as we go along we obtain equal values of $r_\infty$ for both tubes. 

We now modify the metric near the end of the tube to make it round. Let $a=t(s_{m+1} +\delta_0)$, $b=t(S)$ and $\epsilon =r_\infty$. The induced metric on  the end of
the tube $\{ (x,t) \in M \mid a\leq t \leq b\}$ is $h_0=g_\epsilon + dt^2$ where $g_\epsilon$ is the induced metric on
$S^{n-1}(\epsilon)$. Recall that $\epsilon \leq \delta_0$. Let $h_1=\epsilon^2 g_{0,1} + dt^2$ where $g_\epsilon$ and
$g_{0,1}$ have the same meaning as in Lemma \ref{lemma1}. Let $\phi(t) = \psi((t-a)/\delta_0$ where $\psi(u)$ is a smooth function
on $[0,1]$ vanishing near zero, increasing to 1 at $u=3/4$ and equal to 1 for $u>3/4$. Define the new metric $h$ for
$t\in [a,b]$ as $$
h(x,t)= g_\epsilon(x,t) + \phi(t)\left( \epsilon^2 g_{0,1} - g_\epsilon \right) + dt^2.$$
For $t$ near $a$ $h=h_0$, the induced metric on $M$, while for $t$ near $b$, $h=h_1$, the round tube metric.
The first and second derivatives of $\phi$ are of order $O(\delta_0^{-1})$ and $O(\delta_0^{-2})$ respectively.
We consider $h_0$, $h_1$ and $h$ as tensors on the product of the standard sphere $S^{n-1}$ with the interval $[a,b]$ and see that $$
h(x,t) - h_0(x,t) = \phi(t)\left( \epsilon^2 g_{0,1}(x) - g_\epsilon (x)\right) =
\phi(t)\epsilon^2 \left(g_{0,1} - \frac{1}{\epsilon^2} g_\epsilon\right).
$$
It follows from Lemma \ref{lemma1} that the metric $h_0$ has positive (and very large) scalar curvature. Moreover,
since $\epsilon = r_\infty < \delta_0$,
 Lemma \ref{lemma1} and the bounds on derivatives of $\phi(t)$ show that all second order derivatives of
$h(x,t)-h_0(x,t)$ are of order $O(\delta_0^2)$. It follows that the scalar curvature of $h$ is positive provided
$\delta_0$ is chosen sufficiently small. Our construction of the tunnel is now complete.

Clearly, the metric constructed on $U$ has positive scalar curvature and agrees with the metric of $X$ near the
boundary. The estimates in (\ref{diam-vol}) follows since the length of the constructed tube is $O(\delta_0)$ and
each crossection $t=\mbox{const}$ is very close to a round sphere of radius $r(t)<\delta_0$. The proof of the
Theorem is now complete.

\section{Tunnels of prescribed length}

In view of possible applications we state the following consequence of the construction.

\begin{proposition}
Let $p_1\neq p_2 \in X$ be two points in a manifold $X$ of positive scalar curvature. Then for every $L>0$
there exists $\delta > 0$ and a Riemannian metric of positive scalar curvature on the tunnel $U$ as in Figure \ref{Fig.SY-GL-tunnel} such that 
\begin{itemize}
\item[(a)] The new metric agrees with the original metric on the collars $C_1$ and $C_2$.
\item[(b)] The distance $\dist {(C_1,C_2)} =L$ and the diameter $\diam {(U)} = O(L)$.
\item[(c)] $\vol (U) = O (L\delta^{n-1})$.
\item[(d)] For every continuous curve $\alpha$ connecting the two components of the boundary of $U$,
the tubular neighborhood of $\alpha$ of radius $2\pi\delta$ contains $U$ i.e.
$$\{x\in M \mid \dist (x,\alpha) \leq 2\pi\delta \} \supset U.$$
\end{itemize}
Moreover, $\delta$ can be chosen arbitrarily small.
\end{proposition}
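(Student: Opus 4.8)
The plan is to build the tunnel of prescribed length by splicing a long round cylinder into the middle of the tunnel constructed in the Theorem, using the fact that a Riemannian product $S^{n-1}(r_\infty) \times [0,\ell]$ has positive scalar curvature (coming entirely from the round sphere factor, which has scalar curvature $(n-1)(n-2)/r_\infty^2 > 0$), regardless of how large $\ell$ is. Concretely, I would first run the construction of the Theorem for a small $\delta$ (to be fixed at the end) to obtain, for each of the two balls $B(p_i,\delta)$, a tube $M_i$ that agrees with the original metric near $\partial B(p_i,\delta)$ and terminates in an isometric round product $S^{n-1}(r_\infty) \times I$. As noted in the paragraph preceding the metric-rounding step, the choices along the way can be made identically for both tubes, so the two round ends have the same radius $r_\infty$; here $r_\infty = r_\infty(\delta)$ and $r_\infty \leq \delta_0 < \delta$.

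Next I would glue the two round ends to the two ends of a straight round cylinder $S^{n-1}(r_\infty) \times [0,\ell]$, choosing $\ell$ so that the total distance between the collars $C_1$ and $C_2$ equals the prescribed $L$. Since the combined length coming from the two tubes $M_1$, $M_2$ (and their collars' inner parts) is $O(\delta_0) = O(\delta)$, one takes $\ell = L - O(\delta)$; this is positive and well-defined once $\delta$ is small relative to $L$, and then $\dist(C_1,C_2) = L$ exactly while $\diam(U) = \ell + O(\delta) = O(L)$, giving (a) and (b). For (c), the volume of the cylinder is $\ell \cdot \vol(S^{n-1}(r_\infty)) = O(L\, r_\infty^{n-1}) = O(L\,\delta^{n-1})$, and the two tube pieces contribute $O(\delta^n) = O(\delta \cdot \delta^{n-1})$ which is absorbed into $O(L\delta^{n-1})$ as long as $\delta \leq L$; this gives (c). Statement (d) is a packing/geometry observation: any curve $\alpha$ joining the two boundary components of $U$ must pass through every cross-section $\{t = \mathrm{const}\}$, each of which is a sphere of radius $r(t) < \delta$ sitting inside $U$ (round in the cylindrical part, uniformly $C^2$-close to round of radius $r_\infty < \delta$ in the tube parts by Lemma~\ref{lemma1}); hence every point of $U$ lies within intrinsic distance at most half the circumference $\pi \cdot (2\delta)$... more carefully, within distance at most $\pi\delta$ along its own cross-sectional sphere from the point where $\alpha$ meets that sphere — I would phrase it so the crude bound $2\pi\delta$ absorbs the $O(\delta^2)$ distortion of the spheres from Lemma~\ref{lemma1}. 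Finally, since the whole construction works for every sufficiently small $\delta$, we may choose $\delta$ arbitrarily small, establishing the last sentence.

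The genuinely new content beyond the Theorem is small: the only place variable choices interact is, again, the choice of $s_0$ governed by the curvature bounds on the two disks, and that is handled exactly as in the Theorem. The main obstacle, such as it is, is bookkeeping in part (d): one must ensure the tubular neighborhood radius $2\pi\delta$ genuinely contains $U$ and not merely the cylindrical core, which requires knowing that in the (non-round) tube portions $M_i$ the cross-sectional spheres have diameter at most $\pi\delta + O(\delta^2) \leq 2\pi\delta$ and that the tube portions themselves have length $O(\delta_0) \leq 2\pi\delta$, so that points in $M_i$ are within $2\pi\delta$ of $\alpha$. Both facts follow from Lemma~\ref{lemma1} and the length estimate $s_{m+1} = O(\delta_0)$ already established, so no new estimate is needed — only careful constant-chasing, and the generous constant $2\pi$ in the statement is chosen precisely to make this routine.
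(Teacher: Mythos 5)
Your proposal is correct and is essentially the paper's own argument: the paper likewise builds the tunnel from the Theorem for a sufficiently small $\delta$ and then ``telescopes'' it, replacing the central round product $S^{n-1}(r_\infty)\times I$ by a cylinder of the length needed to make $\dist(C_1,C_2)=L$, with (c) and (d) following because every cross-section is close to a round sphere of radius less than $\delta$. Your extra detail on (d) --- that $\alpha$ must meet each cross-sectional sphere, whose intrinsic diameter is at most $\pi\delta+O(\delta^2)$ --- is a correct filling-in of what the paper leaves implicit.
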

\begin{proof}
We begin by choosing $\delta$ sufficiently small so that it is smaller than the injectivity radii at $p_1$ and
$p_2$ and apply the Theorem above to get a tunnel of diameter smaller than $L/2$. The central portion of the tunnel
is isometric to a round sphere of radius $r_\infty < \delta$ and an interval of legth smaller than $L/2$. We simply
replace this interval by one of appropriate length $l$ to make the distance between the collars equal to $L$  exactly, cf. Figure \ref{telescope}. Statements (c) and (d) follow since each crossection $t=\mbox{const}$ is very close to a round sphere of radius $r(t)<\delta$.

\begin{figure}[t]
\begin{overpic}[width=0.7\textwidth,trim=13 350 0 350,clip]{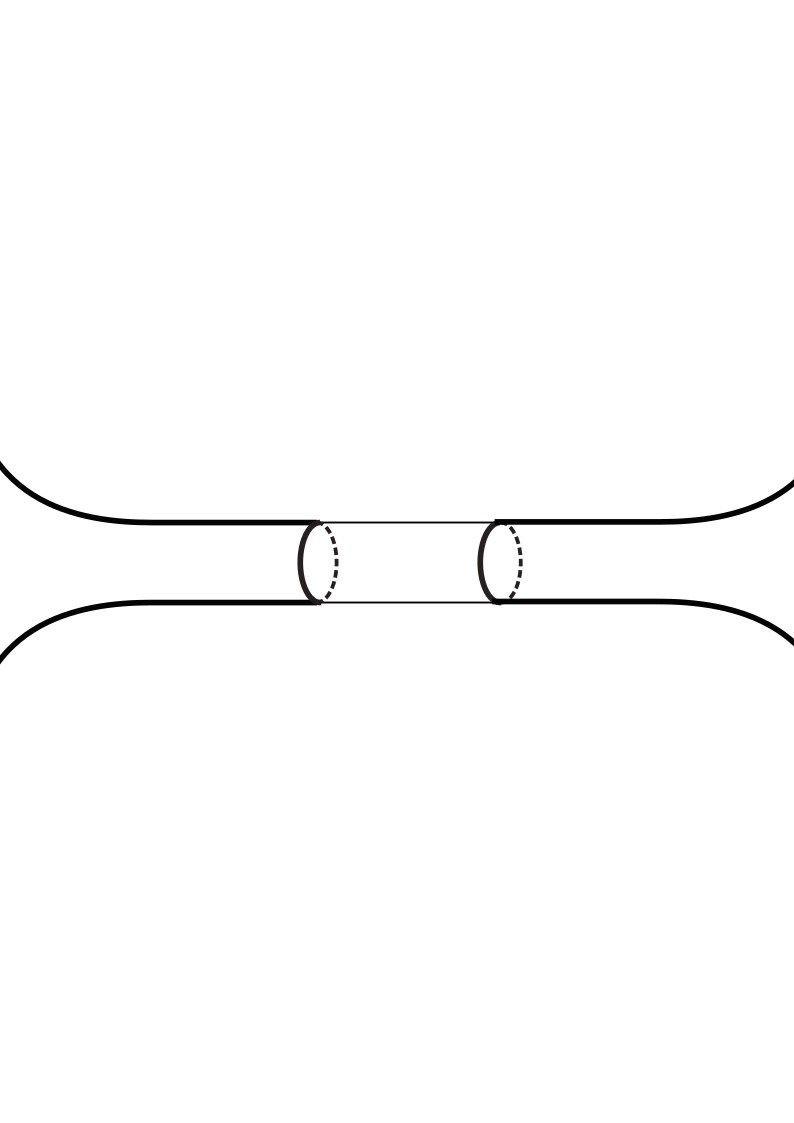}
\put(40,20){$\underbrace{\hphantom{------}}$}
\put(50,13){\large $l$}
\end{overpic}
\setlength{\abovecaptionskip}{-11pt}
\setlength{\belowcaptionskip}{-8pt}
\caption{Telescoping the tunnel}
\setlength{\abovecaptionskip}{-11pt}
\setlength{\belowcaptionskip}{-8pt}
\label{telescope}
\end{figure}

\end{proof}

\end{document}